\makeatletter \@namedef{subjclassname@2010}{
  \textup{2010} Mathematics Subject Classification}
\newtheorem{thm}{Theorem}[section]
\newtheorem{cor}[thm]{Corollary}
\newtheorem{pro}[thm]{Proposition}
\theoremstyle{remark}
\newtheorem*{rema}{Remark}
\theoremstyle{definition}
\newtheorem{exa}[thm]{\textbf{Example}}
\newcommand{\Ima}{\text{\rm{Im}}}
\newcommand{\re}{\text{\rm{Re}}}
\newcommand{\tr}{\text{\rm{tr}}}
\newcommand{\R}{\mathbb{R}}
\newcommand{\N}{\mathbb{N}}
\newcommand{\C}{\mathbb{C}}
\begin{document}

\title[Nilpotence Implying Normality]{When Nilpotence Implies Normality of Bounded Linear Operators}
\author[N. FRID, M. H. MORTAD]{Nassima Frid and Mohammed Hichem Mortad$^*$}

\dedicatory{}
\thanks{* Corresponding author.}
\date{}
\keywords{Nilpotent matrices and operators. Real and Imaginary
parts. Positive operators. Quasinilpotent operators.}

\subjclass[2010]{Primary 15B57, Secondary 15B48, 47A62.}

\address{(The first author): Département de
Mathématiques, Université Oran1, Ahmed Ben Bella, B.P. 1524,
El Menouar, Oran 31000, Algeria.}

\email{nassima.frid@yahoo.fr}

\address{(The corresponding author) Department of
Mathematics, University of Oran 1, Ahmed Ben Bella, B.P. 1524, El
Menouar, Oran 31000, Algeria.}

\email{mhmortad@gmail.com, mortad.hichem@univ-oran1.dz.}

\begin{abstract}
In this paper, we give conditions forcing nilpotent matrices (and
bounded linear operators in general) to be null or equivalently to
be normal. Therefore, a non-zero operator having e.g. a positive
real part is never nilpotent. The case of quasinilpotence is also
considered.
\end{abstract}

\maketitle

\section{Introduction}

Let $H$ be a Hilbert space and let $B(H)$ be the algebra of all
bounded linear operators defined from $H$ into $H$. Recall that
$T\in B(H)$ is said to be positive, symbolically $T\geq0$, if
$<Tx,x>\geq0$ for all $x\in H$. Recall also that any $T$ may always
be expressed as $T=A+iB$ with $A,B\in B(H)$ being both self-adjoint
and $i=\sqrt{-1}$. Necessarily, $A=(T+T^*)/2$ which will be denoted
by $\re T$ and it is called the real part of $T$. Also,
$B=(T-T^*)/{2i}$ is the imaginary part of $T$, written $\Ima T$.

As is well known, nilpotent matrices play an important role in
matrix theory, and in operator theory in general. The following was
shown in \cite{Mortad-square-roots-normal}:

\begin{pro}\label{MORTAD-PROP}
If $T\in B(H)$ is such that $\re T\geq0$ and $T^2=0$, then $T=0$
\end{pro}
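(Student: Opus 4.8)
The plan is to work with the Cartesian decomposition $T = A + iB$, where $A = \re T \geq 0$ and $B = \Ima T$ are both self-adjoint. Expanding the hypothesis $T^2 = 0$ gives $A^2 - B^2 + i(AB + BA) = 0$. Since $A^2 - B^2$ is self-adjoint and $AB + BA$ is self-adjoint (because $(AB+BA)^* = BA + AB$), this is the Cartesian decomposition of $T^2$, and by uniqueness of that decomposition the real and imaginary parts must vanish separately. I would thus extract the two operator identities
\[
A^2 = B^2 \qquad \text{and} \qquad AB + BA = 0.
\]

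Next I would show that $A$ and $B$ in fact commute. A short manipulation of the anticommutation relation $AB = -BA$ yields $A^2 B = A(AB) = -A(BA) = -(AB)A = (BA)A = BA^2$, so $B$ commutes with $A^2$. Because $A \geq 0$, $A$ is the unique positive square root of $A^2$, and by the continuous functional calculus $A$ is a norm-limit of polynomials in $A^2$; hence $B$ commutes with $A$ as well. I expect this to be the crux of the argument, since it is the one place where the positivity of $A$ is genuinely used: without $A \geq 0$ one cannot pass from commutation with $A^2$ to commutation with $A$ itself.

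Once $AB = BA$ is established, the relation $AB + BA = 0$ collapses to $AB = 0$. Then $A^3 = A \cdot A^2 = A B^2 = (AB)B = 0$. Since $A$ is self-adjoint, its norm equals its spectral radius, so $\|A\|^3 = \|A^3\| = 0$ and therefore $A = 0$. Finally $B^2 = A^2 = 0$ forces $\|B\|^2 = \|B^2\| = 0$, whence $B = 0$, and consequently $T = A + iB = 0$, as claimed.
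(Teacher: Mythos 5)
Your proof is correct, and it follows a genuinely different route from the paper. The paper does not actually prove Proposition \ref{MORTAD-PROP} in situ (it is quoted from the reference \cite{Mortad-square-roots-normal}); the argument the paper does give, for the generalization Theorem \ref{Main THM}, is a trace argument in finite dimensions ($\tr T=0$ forces $\tr A=0$, hence $A=0$ by positivity, hence $T=iB$ is normal and so $T=0$), followed by a reduction of the infinite-dimensional case to the finite-dimensional one via the invariant subspace spanned by the orbit $x,Tx,\dots,T^{n-1}x$. Your argument instead expands $T^2$ and uses uniqueness of the Cartesian decomposition to get $A^2=B^2$ and $AB+BA=0$, then upgrades anticommutation to commutation via the fact that the positive operator $A$ is the unique positive square root of $A^2$ and hence a norm-limit of polynomials in $A^2$. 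This has two advantages: it works directly on an arbitrary Hilbert space with no finite-dimensional reduction, and it isolates precisely where positivity enters (passing from $[B,A^2]=0$ to $[B,A]=0$), which is consistent with the paper's counterexample $\left(\begin{smallmatrix}0&1\\0&0\end{smallmatrix}\right)$. What it does not buy is the generality of Theorem \ref{Main THM}: for $T^n=0$ with $n\geq 3$ the expansion of $T^n$ no longer splits into two clean identities, whereas the trace-plus-orbit argument handles all $n$ uniformly. All the individual steps you take are sound, including the final norm computations $\|A^3\|=\|A\|^3$ and $\|B^2\|=\|B\|^2$, which hold because $A$ and $B$ are self-adjoint.
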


In this paper, we carry on this investigation and deal with the
general case.

While we assume readers are familiar with notions and results in
matrix and operator theories (see e.g. \cite{Axler LINEAR ALG DONE
RIGHT} and \cite{Mortad-Oper-TH-BOOK-WSPC}), we recall a few well
established facts. For example, if $T\in B(H)$ is normal, then
\[\|T^n\|=\|T\|^n,~\forall n\in\N.\]
It seems noteworthy to emphasize that thanks to the previous
equality, if $T$ is nilpotent then "$T=0\Leftrightarrow T\text{ is
normal}$". Therefore, when we further assume that $\re T\geq0$ and
prove Theorem \ref{Main THM} below, then this will become yet
another characterization to be added to the 89 conditions equivalent
to the normality of a matrix already obtained in
\cite{Elsner-Ikramov} and \cite{Grone et al-normal matrices}. A
somehow related paper is \cite{Fillmore et al quasinilpotent}. In
the end, we note that there is still some ongoing extensive research
on normal matrices. See e.g. \cite{Wang-Zhang} or
\cite{Gerasimova-Unitary similarity normal matrix}. See also the
interesting paper \cite{Garcia et al-Similarity finite dimensional}
about the similarity of products of normal matrices. In the end,
readers may wish to consult the informal notes \cite{Mercer} which
contain some interesting examples of nilpotent matrices.

\section{Main Results}

The following is the main result in this paper. It tells us that a
(non-zero) operator $T$ with a positive (or negative) real or
imaginary part is never nilpotent.

\begin{thm}\label{Main THM}
Let $T=A+iB\in B(H)$ and let $n\geq2$. If $T^n=0$ and $A\geq0$ (or
$B\geq 0$), then $T=0$.
\end{thm}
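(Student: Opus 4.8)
The plan is to read the hypothesis $A\ge 0$ as the statement that the numerical range of $T$ lies in the closed right half-plane; indeed, for every $x\in H$ one has $\re\langle Tx,x\rangle=\langle Ax,x\rangle\ge 0$. I would first dispose of the case $B\ge 0$ by a reduction: replacing $T$ by $S=-iT$ one computes $\re S=B\ge 0$ and $S^n=(-i)^nT^n=0$, so the $B\ge 0$ case follows at once from the $A\ge 0$ case applied to $S$. Thus it suffices to prove the theorem under the assumption $A=\re T\ge 0$.

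The core idea is to exploit the one-parameter family $e^{-tT}=\sum_{k\ge 0}\frac{(-t)^k}{k!}T^k$, $t\ge 0$, which is well defined for any bounded $T$. First I would show that each $e^{-tT}$ is a contraction. Fixing $x\in H$ and differentiating, $\frac{d}{dt}\|e^{-tT}x\|^2=-2\re\langle Te^{-tT}x,e^{-tT}x\rangle=-2\langle Ae^{-tT}x,e^{-tT}x\rangle\le 0$, using $A\ge 0$ at the vector $e^{-tT}x$. Hence $t\mapsto\|e^{-tT}x\|$ is non-increasing, and therefore $\|e^{-tT}x\|\le\|x\|$ for all $t\ge 0$.

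Now I bring in nilpotence. Since $T^n=0$, the exponential collapses to the finite sum $e^{-tT}=\sum_{k=0}^{p-1}\frac{(-t)^k}{k!}T^k$, where $p$ is the nilpotency index (the least integer with $T^p=0$, so $p\le n$). If $T\neq 0$ then $p\ge 2$, so $T^{p-1}\neq 0$ and we may choose $x$ with $T^{p-1}x\neq 0$. For such $x$, the function $t\mapsto e^{-tT}x$ is a vector-valued polynomial of degree exactly $p-1$ with nonzero leading coefficient $\frac{(-1)^{p-1}}{(p-1)!}T^{p-1}x$; consequently $\|e^{-tT}x\|\to\infty$ as $t\to\infty$. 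This contradicts the bound $\|e^{-tT}x\|\le\|x\|$, forcing $p=1$, that is $T=0$.

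I expect the only delicate point to be the growth step: one must record that a genuinely nonconstant vector polynomial in $t$ cannot stay bounded on $[0,\infty)$, which is exactly where nilpotence and the choice of a witness $x$ with $T^{p-1}x\ne 0$ are used. For the purely matricial statement there is a shorter route worth noting: nilpotence gives $\tr T=0$, hence $\tr A=\re\tr T=0$, and a positive semidefinite matrix of zero trace vanishes, so $A=0$; then $T=iB$ is skew-Hermitian, thus normal, and a normal nilpotent operator is $0$. The semigroup argument above has the advantage of covering an arbitrary Hilbert space $H$ uniformly and of not even invoking Proposition \ref{MORTAD-PROP}.
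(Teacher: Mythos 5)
Your argument is correct, but it proceeds by a genuinely different route from the paper. The paper splits into two cases: in finite dimensions it uses the trace (nilpotence gives $\tr T=0$, hence $\tr A=0$, and a positive operator with zero trace is zero, so $T=iB$ is normal and a normal nilpotent vanishes); in infinite dimensions it reduces to the finite-dimensional case by restricting $T$ to the finite-dimensional invariant subspace spanned by the orbit $x,Tx,\dots,T^{n-1}x$. You instead run a dissipativity/semigroup argument: from $A\geq0$ you get $\frac{d}{dt}\|e^{-tT}x\|^2=-2\langle Ae^{-tT}x,e^{-tT}x\rangle\leq0$, so each $e^{-tT}$ is a contraction, while nilpotence of index $p\geq2$ makes $t\mapsto e^{-tT}x$ a vector polynomial of exact degree $p-1$ for a witness $x$ with $T^{p-1}x\neq0$, hence unbounded --- a contradiction. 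All the steps check out, including the reduction of the case $B\geq0$ to $A\geq0$ via $S=-iT$ (the paper instead repeats the trace argument with $B$ in place of $A$). What your approach buys is uniformity: there is no dimension case split and no appeal to invariant subspaces, and it isolates exactly where nilpotence (as opposed to quasinilpotence) enters, namely in the polynomial collapse of the exponential --- consistent with the paper's remark that the Volterra operator is accretive, quasinilpotent and nonzero. What the paper's route buys is elementarity in the matrix case (only the trace and the fact that a normal nilpotent is zero), which you correctly note as the shorter matricial alternative.
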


\begin{proof}The proof is carried out in two steps.
\begin{enumerate}
  \item Let $\dim H<\infty$. The proof uses a trace argument. First, assume that $A\geq0$.
Clearly, the nilpotence of  $T$ does yield $\tr T=0$. Hence
\[0=\tr(A+iB)=\tr A+i\tr B.\]

Since $A$ and $B$ are self-adjoint, we know that $\tr A,\tr B\in\R$.
By the above equation, this forces $\tr B=0$ and $\tr A=0$. The
positiveness of $A$ now intervenes to make $A=0$. Therefore, $T=iB$
and so $T$ is normal. Thus, and as alluded above,
\[0=\|T^n\|=\|T\|^n,\]
thereby, $T=0$.

In the event $B\geq0$, reason as above to obtain $T=A$ and so $T=0$,
as wished.
  \item Let $\dim H=\infty$. The condition $\re T\geq0$ is
  equivalent to $\re<Tx,x>\geq0$ for all $x\in H$. So if $E$ is a closed invariant
  subspace of $T$, then the previous condition also holds for
  $T|E:E\to E$.

  Now, we proceed to show that $T=0$, i.e. we must show that $Tx=0$ for all $x\in H$. So, let $x\in
  H$ and let $E$ be the span of $x, Tx,\cdots, T^{n-1}x$ (that is, the orbit of $x$ under the action of $T$). Hence $E$ is a finite dimensional subspace of $H$ (and so it is equally a Hilbert space).
  By the nilpotence assumption, we have
  \[T^nx=0,\]
  from which it follows that $E$ is invariant for $T$. So, by the first part
  of the proof (the finite dimensional case), we know that $T=0$ on
  $E$ whereby $Tx=0$. As this holds for any $x$, it follows that
  $T=0$ on $H$, as needed.
\end{enumerate}
\end{proof}

\begin{rema}For example, the condition $A\geq0$ may not just be
dropped. Indeed, if $T=\left(
                         \begin{array}{cc}
                           0 & 1 \\
                           0 & 0 \\
                         \end{array}
                       \right)$, then $T^2=0$ but $T\neq0$. Observe
                       finally
                       that
\[A=\re T=\frac{1}{2}\left(
                       \begin{array}{cc}
                         0 & 1 \\
                         1 & 0 \\
                       \end{array}
                     \right)
\]
is neither positive nor negative for $\sigma(A)=\{-1/2,1/2\}$.
\end{rema}

Mutatis mutandis, we may also show the following result.

\begin{pro}
Let $T$ be a finite square matrice decomposed as $T=A+iB$ (where $A$
and $B$ are self-adjoint). If $T^n=0$ and $A\leq0$ or $B\leq 0$,
then $T=0$.
\end{pro}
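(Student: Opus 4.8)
The plan is to reduce everything to Theorem \ref{Main THM} by a single sign change, which is exactly the ``mutatis mutandis'' the statement alludes to. I would set $S=-T$. Since $T=A+iB$ with $A,B$ self-adjoint, we have $S=(-A)+i(-B)$, so that $\re S=-A$ and $\Ima S=-B$, both of which are again self-adjoint. The hypothesis $A\leq0$ then reads $\re S\geq0$, while $B\leq0$ reads $\Ima S\geq0$; in either case $S$ satisfies precisely one of the positivity conditions required by Theorem \ref{Main THM}.

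Next I would verify that nilpotence is inherited by $S$. This is immediate: $S^n=(-T)^n=(-1)^nT^n=0$, the scalar factor $(-1)^n$ being harmless. Thus $S$ is a nilpotent matrix whose real part (respectively imaginary part) is positive, and Theorem \ref{Main THM} applies verbatim to yield $S=0$. Consequently $T=-S=0$, which is the desired conclusion, with the cases $A\leq0$ and $B\leq0$ dispatched simultaneously.

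Since the statement is confined to finite matrices, one could alternatively bypass the reduction and simply rerun the trace computation from the first step of the proof of Theorem \ref{Main THM}: nilpotence forces $\tr T=0$, hence $\tr A=\tr B=0$; if $A\leq0$ then $-A\geq0$ has zero trace and is therefore $0$, leaving $T=iB$, which is normal, so that $\|T\|^n=\|T^n\|=0$ and $T=0$ (and symmetrically when $B\leq0$).

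There is no substantive obstacle in this proof. The only point demanding the slightest care is confirming that negating $T$ simultaneously negates $\re T$ and $\Ima T$ while preserving the relation $T^n=0$, so that the positivity hypotheses of Theorem \ref{Main THM} are genuinely met; once this elementary bookkeeping is in place the result follows at once.
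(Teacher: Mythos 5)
Your proposal is correct and matches the paper's intent: the paper gives no written proof, merely stating ``mutatis mutandis,'' meaning the argument of Theorem \ref{Main THM} is to be repeated with the obvious sign changes, which is exactly what your trace rerun does (and your $S=-T$ reduction is the same idea packaged as a formal deduction from the theorem rather than a re-proof). Both of your variants are sound, including the key step that $\tr A=0$ together with $A\leq0$ forces $A=0$.
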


\begin{rema}
As mentioned above, the power of Theorem \ref{Main THM} lies in the
fact it easily allows us to test the non-nilpotence of a given
operator. For example, if $V$ is the Volterra's operator defined on
$L^2(0,1)$, that is,
\[Vf(x)=\int_0^xf(t)dt,~f\in L^2(0,1).\]
Then, it well known that $V$ is not nilpotent. Let's corroborate
this fact using Theorem \ref{Main THM}. Since $\re V\geq0$ (see e.g.
Exercise 9.3.21 in \cite{Mortad-Oper-TH-BOOK-WSPC}), assuming the
nilpotence of $V$ would make $V=0$, and this is impossible. Thus,
$V$ is not nilpotent.
\end{rema}

Here is an alternative and interesting reformulation of Theorem
\ref{Main THM} over finite dimensional spaces.

\begin{cor}
Let $T\in M_n(\C)$ be nilpotent (with $T\neq0$). Then $(T+T^*)/2$
(or $(T-T^*)/{2i}$) has at least two eigenvalues of opposite signs.
\end{cor}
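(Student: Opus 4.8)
The plan is to derive this corollary as a direct restatement of Theorem~\ref{Main THM} together with the companion Proposition handling the cases $A\leq 0$ and $B\leq 0$, by means of a contrapositive argument. Write $A=\re T$ and $B=\Ima T$; both are self-adjoint, so by the spectral theorem they have only real eigenvalues and are diagonalizable. Since $T$ is nilpotent and nonzero, its index of nilpotence is some integer $n\geq 2$, whence $T^n=0$ with $n\geq 2$, which is exactly the hypothesis shared by the two results I intend to invoke.

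The first step I would isolate is the dictionary between the eigenvalue condition in the statement and the definiteness conditions appearing in those hypotheses: for a self-adjoint $M$, to say that $M$ has no two eigenvalues of opposite signs is to say that all of its (real) eigenvalues are $\geq 0$, or all are $\leq 0$, i.e. $M\geq 0$ or $M\leq 0$. The boundary case in which every eigenvalue vanishes gives $M=0$, for which both inequalities hold simultaneously, so it is absorbed without incident.

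Granting this, I would argue by contradiction. Suppose $A$ had no two eigenvalues of opposite signs. By the dictionary, either $A\geq 0$ or $A\leq 0$. In the former case Theorem~\ref{Main THM}, and in the latter the companion Proposition, applied to $T^n=0$, forces $T=0$, contradicting $T\neq 0$. Hence $A$ carries at least two eigenvalues of opposite signs. Running the same argument with $B$ in place of $A$, and appealing to the $B\geq 0$ and $B\leq 0$ clauses of the same two results, yields the identical conclusion for $B$; thus in fact both the real and the imaginary part have eigenvalues of opposite signs.

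I expect no genuine obstacle, as the corollary is essentially a reformulation of results already in hand. The only point calling for care is the logical bookkeeping in the third paragraph: one must check that the negation of the eigenvalue hypothesis splits precisely into the two semidefinite alternatives---positive and negative---and that the all-zero spectrum is correctly subsumed, so that each possible sign pattern of the spectrum of $A$ (and of $B$) falls under the scope of one of the two cited theorems.
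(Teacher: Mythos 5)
Your proposal is correct and follows essentially the same route as the paper: argue by contradiction, note that the absence of two eigenvalues of opposite signs for the self-adjoint part means $A\geq 0$ or $A\leq 0$, and then invoke Theorem~\ref{Main THM} (resp.\ the companion proposition for the negative semidefinite case) to force $T=0$. Your version is slightly more careful than the paper's in spelling out that the negation of the eigenvalue condition is exactly the semidefiniteness dichotomy, including the all-zero spectrum, but the underlying argument is the same.
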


\begin{proof}If the self-adjoint $(T+T^*)/2$ has only positive (or
only negative) eigenvalues, then the nilpotence of $T$ would yield
$T=0$, contradicting the other assumption. Reason similarly with
$(T-T^*)/{2i}$.
\end{proof}

\begin{rema}
It is well known that a nilpotent operator $T$ necessarily has a
spectrum reduced to the singleton $\{0\}$ (operators with this
property are called quasinilpotent). As readers are already wary,
the concepts of nilpotence and quasinilpotence do coincide on finite
dimensional vector spaces. So, it is legitimate to wonder whether,
in the infinite dimensional setting, the nilpotence's assumption may
be weakened to requiring $\sigma(T)=\{0\}$ only? The answer is no by
considering again the Volterra's operator. Indeed, if $V$ designates
the Volterra's operator on $L^2(0,1)$, then it well known that
$\sigma(V)=\{0\}$ (i.e. $V$ is quasinilpotent) and that $\re V\geq0$
and yet $V\neq0$.
\end{rema}

In many results in operator theory, the asymmetric condition
$\sigma(A)\cap \sigma(-A)\subseteq \{0\}$ yields similar conclusions
as when assuming the positivity (or negativity) of $A$. It is also
known that this asymmetric condition is weaker that positiveness
(and negativeness) of $A$.

Nonetheless, we have the following result.

\begin{thm}\label{asymmetric spectrum dim leq 3 THM}Let $H$ be a Hilbert space of dimension $k$ where $k=2$ or $k=3$. Let $T=A+iB\in B(H)$
be nilpotent. If $\sigma(A)\cap \sigma(-A)=\{0\}$ or $\sigma(B)\cap
\sigma(-B)=\{0\}$, then $T=0$.
\end{thm}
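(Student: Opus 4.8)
The plan is to reduce everything to showing that the self-adjoint part named in the hypothesis vanishes, and then to invoke the normality argument already used for Theorem \ref{Main THM}. Assume first that $\sigma(A)\cap\sigma(-A)=\{0\}$. Since $T$ is nilpotent its eigenvalues are all $0$, so $\tr T=0$; because $A$ and $B$ are self-adjoint and hence have real trace, splitting $\tr T=\tr A+i\tr B$ into real and imaginary parts yields $\tr A=\tr B=0$. My target is to prove that $A=0$: once this is in hand, $T=iB$ is normal, and a normal nilpotent operator is necessarily zero, since $\|T\|^{m}=\|T^{m}\|=0$ at the nilpotence index $m$.

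Next I would unpack the two distinct pieces of information carried by the hypothesis. The equality $\sigma(A)\cap\sigma(-A)=\{0\}$ says, on the one hand, that $0\in\sigma(A)$, so that $A$ is singular and $0$ is a genuine eigenvalue; and, on the other hand, that no nonzero $\mu\in\sigma(A)$ can have $-\mu\in\sigma(A)$, i.e. the spectrum of $A$ contains no pair of opposite nonzero eigenvalues. It is precisely the first point, the forced presence of $0$ in $\sigma(A)$, that separates the equality $=\{0\}$ from the weaker inclusion $\subseteq\{0\}$ and that makes the whole argument run.

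The heart of the matter is then a short eigenvalue count using $\tr A=0$ together with $\dim H=k\in\{2,3\}$. List the eigenvalues of the self-adjoint $A$ as $\mu_1,\dots,\mu_k$ (real, with multiplicity), so that $\sum_i\mu_i=0$ and at least one $\mu_i$ equals $0$. For $k=2$, the vanishing trace immediately forces the two eigenvalues to be $0,0$, whence $A=0$. For $k=3$, if exactly one eigenvalue were $0$, the remaining two would sum to $0$ and thus form an opposite pair $\{\nu,-\nu\}$; unless $\nu=0$ this violates the no-opposite-pair condition, so again all three eigenvalues vanish and $A=0$. In either case $A=0$, and the normality argument closes the proof. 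The case $\sigma(B)\cap\sigma(-B)=\{0\}$ is treated mutatis mutandis, giving $B=0$, so that $T=A$ is self-adjoint, hence normal and nilpotent, hence zero.

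The step I expect to be the genuine obstacle, and the reason for the restriction $k\le 3$, is the eigenvalue bookkeeping: it closes only because in dimension at most three a single forced zero eigenvalue, combined with the vanishing trace, compels the remaining eigenvalues into an opposite pair that the hypothesis forbids. This mechanism breaks down from dimension four onward, where a trace-zero spectrum such as $\{0,1,1,-2\}$ satisfies $\sigma(A)\cap\sigma(-A)=\{0\}$ with $A\neq0$, and such an $A$ can be completed to an honestly nilpotent $T=A+iB$. Thus the dimension hypothesis is essential, and the delicate point in the final write-up is simply to confirm that no admissible nonzero configuration survives once $k\in\{2,3\}$.
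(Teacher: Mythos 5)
Your proposal is correct and follows essentially the same route as the paper: extract $\tr A=\tr B=0$ from nilpotence, use self-adjointness to diagonalize, and combine the zero trace with the forced membership $0\in\sigma(A)$ and the no-opposite-pair condition to conclude $A=0$ in dimensions $2$ and $3$, after which $T=iB$ is normal and nilpotent, hence zero. Your closing observation about why the bookkeeping fails from dimension four onward matches the paper's subsequent remark and counterexample.
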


\begin{proof}\hfill
\begin{enumerate}
  \item Let $k=2$. As above, we may obtain that $\tr A=0$. Since $A$ is self-adjoint, it follows that $A$ is similar to $\left(
                                                                                                                              \begin{array}{cc}
                                                                                                                                \alpha & 0 \\
                                                                                                                                0 & -\alpha \\
                                                                                                                              \end{array}
                                                                                                                            \right)$ where $\alpha\in\R$. So, if $\alpha\neq0$, then $\sigma(A)\cap
                                                                                                                            \sigma(-A)=\{0\}$ will be
                                                                                                                            violated.
                                                                                                                            Thence,
                                                                                                                            $\alpha=0$,
                                                                                                                            that
                                                                                                                            is,
                                                                                                                            $A=0$.
                                                                                                                            Consequently,
                                                                                                                            we
                                                                                                                            obtain
                                                                                                                            $T=0$
                                                                                                                            as
                                                                                                                            above.
                                                                                                                            The
                                                                                                                            corresponding case for $B$ can be dealt with similarly.
  \item Assume now that $k=3$. If $\sigma(A)\cap \sigma(-A)=\{0\}$,
  then in view of the self-adjointness of $A$, we know that $A$ is
  similar to $\left(
                \begin{array}{ccc}
                  0 & 0 & 0 \\
                  0 & \alpha & 0 \\
                  0 & 0 & -\alpha \\
                \end{array}
              \right)$ (where $\alpha\in\R$) given that $\tr A=0$ and
              $0\in\sigma(A)$. As before, we mus necessarily have
              $\alpha=0$ and so $A=0$. The nilpotence of $T=iB$ then
              gives $T=0$.
\end{enumerate}
\end{proof}

\begin{rema}
When $\dim H=4$, a similar idea is seemingly not applicable. Indeed,
a self-adjoint $4\times 4$ matrix $A$ such that $\tr A=0$ and
$\sigma(A)\cap\sigma(-A)=\{0\}$ may be non-null. For example, take
\[A=\left(
      \begin{array}{cccc}
        0 & 0 & 0 &0\\
        0 & 3 & 0 & 0\\
        0 & 0 & -2 & 0\\
        0 &  0 &  0 & -1
      \end{array}
    \right).
\]
Then clearly $\tr A=0$ and
\[\sigma(A)\cap \sigma(-A)= \{0\},\]
yet $A\neq 0$. This does not exclude the possibility of a proof when
$\dim H=4$. Let us therefore give a counterexample:
\end{rema}

\begin{exa}\label{pp}Take
\[T=\left(
      \begin{array}{cccc}
        2 & 2 & -2 & 0 \\
        5 & 1 & -3 & 0 \\
        1 & 5 & -3 & 0 \\
        0 & 0 & 0 & 0 \\
      \end{array}
    \right)\text{ and so } A=\left(
      \begin{array}{cccc}
        2 & 7/2 & -1/2 & 0 \\
        7/2 & 1 & 1 & 0 \\
        -1/2 & 1 & -3 & 0 \\
        0 & 0 & 0 & 0 \\
      \end{array}
    \right).\]
Hence (approximatively)
\[\sigma(A)=\{0, -3.71,-1.33,5.04\}\]
and so $\sigma(A)\cap \sigma(-A)=\{0\}$ is trivially satisfied.
Observe finally that $T\neq0$ whereas $T^3=0$, i.e. $T$ is
nilpotent.
\end{exa}

We may easily prove the following result.

\begin{pro}Let $H$ be a 4-dimensional Hilbert space. Let $T=A+iB\in B(H)$
be nilpotent. If $\sigma(A)\cap \sigma(-A)=\{0\}$ with 0 being an
eigenvalue of multiplicity 2, then $T=0$.
\end{pro}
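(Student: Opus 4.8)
The plan is to follow the template already used in the proof of Theorem \ref{asymmetric spectrum dim leq 3 THM}: reduce the whole statement to showing that the real part $A$ vanishes, since once $A=0$ we have $T=iB$, which is skew-adjoint and hence normal; a normal nilpotent operator must be $0$ because $\|T^n\|=\|T\|^n$. So the real content is an eigenvalue-counting argument for the self-adjoint matrix $A$. First I would invoke nilpotence to get $\tr T=0$, and, since $\tr A$ and $\tr B$ are both real, conclude $\tr A=0$.

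Next I would bring in the self-adjointness of $A$ together with the hypothesis that $0$ is an eigenvalue of multiplicity $2$. On the $4$-dimensional space this pins the spectrum of $A$ down to eigenvalues $0,0,\alpha,\beta$ for some $\alpha,\beta\in\R$, and the trace condition forces $\alpha+\beta=0$, i.e. $\beta=-\alpha$. If $\alpha\neq0$, then both $\alpha$ and $-\alpha$ belong to $\sigma(A)$, so $\alpha\in\sigma(A)\cap\sigma(-A)$ with $\alpha\neq0$, contradicting $\sigma(A)\cap\sigma(-A)=\{0\}$. Hence $\alpha=0$, every eigenvalue of $A$ is $0$, and the self-adjointness (hence diagonalizability) of $A$ gives $A=0$. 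Then $T=iB$ is normal and nilpotent, so $T=0$, as claimed.

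The step I expect to carry the weight is not computational but conceptual: recognising why the multiplicity-$2$ hypothesis is exactly what makes the argument close, in contrast with the genuine counterexample of Example \ref{pp}. There the spectrum $\{0,-3.71,-1.33,5.04\}$ has $0$ of multiplicity only $1$, leaving three free nonzero eigenvalues that can sum to $0$ without forming a symmetric pair $\pm\alpha$; this is precisely what allows $\sigma(A)\cap\sigma(-A)=\{0\}$ with $A\neq0$. Forcing $0$ to have multiplicity $2$ removes one degree of freedom, leaving only two nonzero eigenvalues, and the trace constraint then compels them to be negatives of each other $\textemdash$ exactly the symmetric configuration the asymmetric condition forbids. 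So the main obstacle is simply to isolate this interplay between the trace identity and the multiplicity assumption; once that is seen, the proof is a short direct computation, in keeping with the preceding remark that it may be proved easily.
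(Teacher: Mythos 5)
Your proposal is correct and follows essentially the same route as the paper: both reduce to showing that the multiplicity-$2$ hypothesis together with $\tr A=0$ forces $A\sim\mathrm{diag}(0,0,\alpha,-\alpha)$, whence the asymmetry condition gives $\alpha=0$, $A=0$, and the normality of $T=iB$ finishes the argument. Your write-up merely makes explicit the eigenvalue-counting step that the paper's terse proof leaves implicit.
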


\begin{proof}
Just write
\[A\sim \left(
      \begin{array}{cccc}
        0 & 0 & 0 &0\\
        0 & 0 & 0 & 0\\
        0 & 0 & \alpha & 0\\
        0 &  0 &  0 & -\alpha
      \end{array}
    \right)\]
and obtain $A=0$. Hence $T=iB$ and so $T=0$, as above.
\end{proof}

We also have the following related result.

\begin{pro}
If $A$ is a self-adjoint $2\times2$ matrix such that $\sigma(A)\cap
\sigma(-A)=\varnothing$, then $T=A+iB$ is never nilpotent.
\end{pro}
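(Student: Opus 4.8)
The plan is to reduce nilpotence to a trace condition, exactly as in the proof of Theorem \ref{Main THM}, and then to observe that the hypothesis $\sigma(A)\cap\sigma(-A)=\varnothing$ is incompatible with that condition. First I would recall that any nilpotent matrix $T$ satisfies $\tr T=0$. Since $A$ and $B$ are self-adjoint, $\tr A,\tr B\in\R$, so the equation $0=\tr T=\tr A+i\tr B$ would force $\tr A=0$ (and $\tr B=0$). Thus if $T$ were nilpotent, we would necessarily have $\tr A=0$.

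Next I would show that the spectral hypothesis actually forbids $\tr A=0$. Writing $\sigma(A)=\{\lambda_1,\lambda_2\}$ with $\lambda_1,\lambda_2\in\R$ (possible since $A$ is self-adjoint), the condition $\sigma(A)\cap\sigma(-A)=\varnothing$ excludes in particular the equality $\lambda_1=-\lambda_2$; equivalently $\lambda_1+\lambda_2\neq0$, that is, $\tr A\neq0$. Combining the two steps yields a contradiction: a nilpotent $T$ would require $\tr A=0$, whereas the hypothesis guarantees $\tr A\neq0$. Hence no such $T$ can be nilpotent, which is the claim. (I note in passing that the empty-intersection hypothesis also forces each $\lambda_j\neq0$, so $A$ is invertible, but only the statement $\tr A\neq0$ is needed here.)

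Since the deduction is essentially immediate, I expect no serious obstacle; the only point needing a moment's care is verifying that the empty-intersection hypothesis forces $\tr A\neq0$ in every configuration of the spectrum. When $A$ has two distinct eigenvalues this is the computation above; when $A$ has a repeated eigenvalue $\lambda$, the hypothesis reads $\lambda\neq-\lambda$, i.e. $\lambda\neq0$, so again $\tr A=2\lambda\neq0$. In either case $\tr A\neq0$, and the argument goes through. It is worth emphasizing that this is precisely where the present hypothesis is genuinely stronger than the $\sigma(A)\cap\sigma(-A)=\{0\}$ condition studied earlier: the latter permits $\tr A=0$ (as the $4\times4$ examples show), whereas $\sigma(A)\cap\sigma(-A)=\varnothing$ rules it out outright in dimension two.
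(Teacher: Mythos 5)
Your proposal is correct and follows essentially the same route as the paper: nilpotence forces $\tr T=0$, hence $\tr A=0$ since $A,B$ are self-adjoint, and a traceless self-adjoint $2\times2$ matrix has spectrum $\{\alpha,-\alpha\}$, which contradicts $\sigma(A)\cap\sigma(-A)=\varnothing$. Your explicit check of the repeated-eigenvalue case is a small extra care the paper leaves implicit, but the argument is the same.
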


\begin{proof}
If $T$ were nilpotent, then $\tr A=0$. This would necessarily make
$A$ look like $\left(
   \begin{array}{cc}
     \alpha & 0 \\
     0 & -\alpha \\
   \end{array}
 \right)
$ (with $\alpha\in\R$). This condition is, however, not consistent
with $\sigma(A)\cap \sigma(-A)=\varnothing$. Thus, $T$ cannot be
nilpotent.
\end{proof}

Finally, notice that there are nilpotent matrices $T=A+iB$ of higher
order such that $\sigma(A)\cap \sigma(-A)=\varnothing$. We may just
consider the non-zero block matrix from Example \ref{pp}. Otherwise,
we give another example:

\begin{exa}Let
\[T=\left(
      \begin{array}{cccc}
        0 & 1 & 2 & 4 \\
        0 & 0 & 2 & 1 \\
        0 & 0 & 0 & 5 \\
        0 & 0 & 0 & 0 \\
      \end{array}
    \right)\text{ and so } A=\left(
      \begin{array}{cccc}
        0 & 1/2 & 1 & 2 \\
        1/2 & 0 & 1 & 1/2 \\
        1 & 1 & 0 & 5/2 \\
        2 & 1/2 & 5/2 & 0 \\
      \end{array}
    \right).\]
Then, it may be checked that the eigenvalues of $A$ are
approximatively $+4.058$, $-1.043$, $-2.811$ and $-0.205$. Hence the
requirement $\sigma(A)\cap \sigma(-A)=\varnothing$ is clearly
satisfied. In the end, $T^4=0$ (with $T^3\neq0$).

\end{exa}

\section{Acknowledgement}

The authors wish to express their gratitude to Professor Alexander
M. Davie (the University of Edinburgh, UK) for his help in the
infinite dimensional case of the proof of Theorem \ref{Main THM}.

\end{document}